\numberwithin{equation}{section}
\newcommand{\jap}[1]{\langle #1 \rangle}
\def\a{\alpha}
\def\c{\gamma}
\def\e{\varepsilon}
\def\g{\psi}
\def\m{\mu}
\def\re{\mathbb{R}}
\def\pa{\partial}
\newtheorem{thm}{Theorem}[section]
\newtheorem{lem}[thm]{Lemma}
\newtheorem{prop}[thm]{Proposition}
\newtheorem{cor}[thm]{Corollary}
\theoremstyle{definition}
\theoremstyle{remark}
\newtheorem{rem}[thm]{Remark}
\title[]% 
{A remark on Strichartz estimates for Schr\"odinger equations with slowly decaying potentials}
\author{Kouichi Taira}
\address{Department of Mathematical Sciences, Ritsumeikan University, 1-1-1 NojiHigashi, Kusatsu, 525-8577 Japan}
\email{ktaira@fc.ritsumei.ac.jp}
\date{}
\begin{document}
\maketitle

\begin{abstract}
In this short note, we prove Strichartz estimates for Schr\"odinger operators with slowly decaying singular potentials in dimension two. This is a generalization of the recent results by Mizutani, which are stated for dimension greater than two. The main ingredient of the proof is a variant of Kato's smoothing estimate with a singular weight.
\end{abstract}

%%%%%%%%%%%%%%%%%%%%%%%%%%%%%%%%%%%%%%%%%
%%%%%%%%%%%%  Section 1  %%%%%%%%%%%%%%%%
%%%%%%%%%%%%%%%%%%%%%%%%%%%%%%%%%%%%%%%%%

\section{Introduction}

The purpose of this note is to prove the Strichartz estimates for Schr\"odinger equations with  slowly decaying singular potentials in dimension two. Such results have been recently proved by Mizutani \cite{M2} in dimension greater than two. Throughout this paper, we assume $0<\m<2$. We consider the two-dimensional admissible condition:
\begin{align}\label{admissible}
p\geq 2,\quad \frac{2}{p}+\frac{2}{q}=1,\quad (p,q)\neq (2,\infty).
\end{align}
The main result of this note is the following theorem:

\begin{thm}\label{twoSt}
Let $n=2$, $Z>0$ and $H_1=-\Delta+Z|x|^{-\m}+\e V_S(x)$, where $V_S\in C^{\infty}(\re^2;\re)$ satisfies
\begin{align*}
|\pa_x^{\a}V_S(x)|\leq C_{\a}(1+|x|)^{-1-\m-|\a|}
\end{align*}
for all $\a\in\mathbb{Z}_+^2$.
Then there exists $\e_*>0$ such that for all $\e\in [0,\e_*)$ and $(p,q), (\tilde{p},\tilde{q})$ satisfying $(\ref{admissible})$, there exists $C>0$ such that
\begin{align}
\|e^{-itH_1}u_0\|_{L^p(\re;L^q(\re^2))}\leq& C\|u_0\|_{L^2(\re^2)},\label{hom}\\
\|\int_0^te^{-i(t-s)H_1}F(s)ds\|_{L^p(\re;L^q(\re^2))}\leq& C\|F\|_{L^{\tilde{p}^*}(\re;L^{\tilde{q}^*}(\re^2))}\label{Inhom}
\end{align}
for all $u_0\in L^2(\re^2)$ and $F\in L^{\tilde{p}^*}(\re;L^{\tilde{q}^*}(\re^2))$.

\end{thm}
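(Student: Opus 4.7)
The plan is to follow the by-now-standard strategy (Rodnianski--Schlag, Burq--Planchon--Stalker--Tahvildar-Zadeh, Mizutani \cite{M2}) of deducing Strichartz for $e^{-itH_1}$ from Strichartz for the free propagator via Duhamel, using a Kato-type smoothing estimate with the weight $|x|^{-\m/2}$ adapted to the singularity of the potential to absorb the correction term. Writing $V=Z|x|^{-\m}+\e V_S$ and factorising $V=W_1 W_2$ with $|W_j(x)|\lesssim |x|^{-\m/2}\jap{x}^{-1/2}$, Duhamel gives
\[
e^{-itH_1}u_0=e^{it\Delta}u_0-i\int_0^t e^{i(t-s)\Delta}\,W_1(x)\,W_2(x)\,e^{-isH_1}u_0\,ds.
\]
The free term is controlled by the Keel--Tao Strichartz inequality for $e^{it\Delta}$ (valid on all admissible pairs \eqref{admissible} except the endpoint $(2,\infty)$, which is already excluded from our hypothesis).

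The first main ingredient I would need is the Kato smoothing estimate for $H_1$,
\[
\|W_2\, e^{-itH_1}u_0\|_{L^2(\re;L^2(\re^2))}\leq C\|u_0\|_{L^2(\re^2)},
\]
together with its free counterpart $\|W_1 e^{it\Delta}u_0\|_{L^2 L^2}\lesssim \|u_0\|_{L^2}$. Combined with the dual inhomogeneous Strichartz for $e^{it\Delta}$, these two smoothing bounds (via a standard $TT^*$ and Christ--Kiselev argument) control the Duhamel correction term in $L^p_t L^q_x$, yielding the homogeneous estimate \eqref{hom}. The inhomogeneous estimate \eqref{Inhom} then follows by a second Duhamel iteration, again absorbing $V$ via the Kato smoothing estimate and the dual Kato/Strichartz estimate.

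The single hard point, which is what separates the two-dimensional case from the higher-dimensional one in \cite{M2}, is the Kato smoothing bound itself. By Kato's $TT^*$ characterisation, it is equivalent to the uniform limiting-absorption bound
\[
\sup_{\l\in\re,\,\e>0}\bigl\|\,W_2\,(H_1-\l-i\e)^{-1}\,W_2^{\,*}\bigr\|_{L^2(\re^2)\to L^2(\re^2)}<\infty.
\]
At intermediate and high energy the usual Mourre/commutator argument (using the radiality and symbol estimates on $V$) applies just as in dimension $\ge 3$. The genuine obstacle is the low-energy regime $\l\to 0$: in two dimensions the free resolvent develops a logarithmic singularity at zero, and the free Kato smoothing with the weight $|x|^{-\m/2}$ is generically false because the usual Hardy inequality degenerates for $n=2$.

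To overcome this I would exploit the positivity and slow decay of $Z|x|^{-\m}$: because $Z>0$ and $\m<2$, the potential is form-bounded from below and dominates the log singularity of the free resolvent through a Hardy-type inequality matched to the weight $|x|^{-\m/2}$. More concretely, one writes $|x|^{-\m/2}(H_1-\l-i\e)^{-1}|x|^{-\m/2}=\bigl(1+|x|^{-\m/2}(-\Delta-\l-i\e)^{-1}|x|^{-\m/2}(Z+\e\tilde V_S)\bigr)^{-1}|x|^{-\m/2}(-\Delta-\l-i\e)^{-1}|x|^{-\m/2}$, and the point is that the ``bad'' log-divergent part of the free resolvent on the right is exactly cancelled by the positive factor $Z$ in the Neumann-type inverse on the left, after checking the absence of a zero-energy resonance for $H_1$ (which is automatic under our sign assumption on $Z|x|^{-\m}$). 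This is the ``variant of Kato's smoothing estimate with a singular weight'' advertised in the abstract. Once this low-frequency resolvent bound is established, the smallness of $\e$ in the $V_S$ term is used only to run a standard perturbation/Fredholm argument, and the Strichartz estimates \eqref{hom}--\eqref{Inhom} follow from the scheme sketched above.
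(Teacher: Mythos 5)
There is a genuine gap: the perturbative scheme you set up around the free propagator $e^{it\Delta}$ cannot work for this potential, and your proposed fix does not repair the step that actually fails. First, the factorisation $V=W_1W_2$ with $|W_j(x)|\lesssim |x|^{-\mu/2}\jap{x}^{-1/2}$ is impossible: it would force $|V(x)|\lesssim |x|^{-\mu}\jap{x}^{-1}$, whereas $V\sim Z|x|^{-\mu}$ at infinity. With the only available factorisation ($|W_j|\sim|x|^{-\mu/2}$), your scheme needs the \emph{free} smoothing bound $\|W_1e^{it\Delta}u_0\|_{L^2_tL^2_x}\lesssim\|u_0\|_{L^2}$, equivalently a uniform weighted resolvent bound for $-\Delta$. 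This fails in dimension two for two independent reasons: the weight decays only like $\jap{x}^{-\mu/2}$ at infinity, which is slower than $\jap{x}^{-1/2}$ when $\mu\le 1$ (so it fails in every dimension — this slow decay is the whole point of the problem), and even for rapidly decaying weights the two-dimensional free resolvent has a logarithmic singularity at zero energy, so no such weight is uniformly $(-\Delta)$-smooth. Your resolvent-identity argument exploiting $Z>0$ concerns only the \emph{perturbed} resolvent $(H_1-\lambda-i\varepsilon)^{-1}$; it does not supply the free smoothing input that your Duhamel/Christ--Kiselev argument requires, and no choice of $\varepsilon_*$ or absence of zero resonance can restore it.

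The paper's route is structurally different and avoids both obstacles: the unperturbed evolution is not $e^{it\Delta}$ but $e^{-it(H_0+V_1)}$, where $W=Z|x|^{-\mu}+\varepsilon V_S$ is split as $W=V_1+V_2$ with $V_1=\chi^2+(1-\chi^2)W$ a smooth, repulsive, slowly decaying potential (satisfying (i)--(iii)), for which Strichartz estimates hold by Mizutani's Theorem 1.1 in \cite{M2}; the perturbation $V_2=\chi^2(W-1)$ is \emph{compactly supported}, so after Rodnianski--Schlag's double-smoothing theorem the whole problem reduces to showing that the compactly supported singular weight $\chi(x)|x|^{-\mu/2}$ is $(H_0+V_1)$-smooth and $(H_0+W)$-smooth. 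The first of these is exactly Theorem \ref{main}/Corollary \ref{corsmooth} (the new two-dimensional ingredient, proved via low-energy bounds exploiting the repulsive conditions plus, for $1<\mu<2$, uniform Sobolev-type resolvent estimates), and the second is \cite[Proposition 5.1]{M2} for $\varepsilon$ small. If you want to salvage your write-up, you must replace the free propagator by such a repulsive reference Hamiltonian and make the perturbation compactly supported; as stated, your key smoothing ingredient is false and the decomposition on which the argument rests does not exist.
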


Moreover, we introduce a family of smooth slowly decaying potential with repulsive conditions.
Assume that a real-valued function $V$ satisfies

\noindent $(i)$ $|\pa_x^{\a}V(x)|\leq C_{\a}(1+|x|)^{-\m-|\a|}$

\noindent $(ii)$ There exists $C>0$ such that $V(x)\geq C(1+|x|)^{-\m}$.

\noindent $(iii)$ There exist $C>0$ and $R>0$ such that $-x\cdot \pa_xV(x)\geq C|x|^{-\m}$ for $|x|>R$.

\noindent Moreover, we set 
\begin{align*}
H=H_0+V,\quad H_0=-\Delta.
\end{align*}

As is stated above, Theorem \ref{twoSt} is proved in \cite[Theorem 1.2]{M2} for dimension $n\geq 3$. Its proof is based on the Strichartz estimates for the propagator $e^{-itH}$ and the smooth perturbation method developed in \cite{BM} and \cite{RodSc}.
To control the local singularities $|x|^{-\m}$, we need Kato's smoothing estimates with the weight $\chi(x)|x|^{-\m}$ with $\chi\in C_c^{\infty}(\re^n)$.
For $n\geq 3$, such estimates follow from the endpoint Strichrtz estimates for $e^{-itH}$ and the result in \cite{BM} (see \cite[Remark 1.6]{M2} and the proof of \cite[Theorem 1.2]{M2}). However, since the endpoint Strichartz estimates might not be hold in dimension two, we cannot use this strategy for $n=2$. In this note, we supply such a space-time $L^2$-estimate stated in \cite[Remark 1.6]{M2} for $n=2$. More strongly, we have the following theorem.

\begin{thm}\label{main}
Suppose $n=2$ and let $\chi\in C_c^{\infty}(\re^2)$. Then we have
\begin{align*}
\sup_{z\in\mathbb{C}\setminus \re}\|\chi(x)|x|^{-\frac{\m}{2}}(H-z)^{-1}|x|^{-\frac{\m}{2}}\chi(x)\|<\infty.
\end{align*}
\end{thm}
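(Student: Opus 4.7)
The plan is to use Kato's equivalence between uniform weighted resolvent bounds and Schr\"odinger smoothing estimates: setting $T=\chi|x|^{-\m/2}$, the claim of Theorem~\ref{main} is equivalent to the space--time bound
\begin{equation*}
\int_\re\|Te^{-itH}u_0\|_{L^2(\re^2)}^2\,dt\leq C\|u_0\|_{L^2}^2,
\end{equation*}
which I will establish. The natural starting point is the standard polynomial-weight limiting absorption principle $\sup_{z\notin\re}\|\langle x\rangle^{-s}(H-z)^{-1}\langle x\rangle^{-s}\|<\infty$ for $s>1/2$, obtained by Mourre theory using the generator of dilations $A=\tfrac{1}{2}(x\cdot D+D\cdot x)$: one computes $i[H,A]=2H_0-x\cdot\nabla V$, which is bounded below by $c|x|^{-\m}$ on $|x|>R$ thanks to assumption~(iii), and the same condition rules out positive embedded eigenvalues together with any zero-energy eigenvalue or resonance, so the LAP is uniform down to the threshold $z=0$.

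The heart of the argument is the upgrade from the regular weight $\langle x\rangle^{-s}$ to the locally singular weight $\chi|x|^{-\m/2}$. For this I would invoke the two-dimensional fractional Hardy inequality (available since $0<\m/2<1$),
\begin{equation*}
\int_{\re^2}|x|^{-\m}|v|^2\,dx\leq C\|(-\Delta)^{\m/4}v\|_{L^2}^2,
\end{equation*}
which trades the singular weight for $\m/2$ derivatives of regularity, and reduce matters to a local smoothing estimate with fractional derivative gain
\begin{equation*}
\int_\re\|\tilde\chi\,(-\Delta)^{\m/4}e^{-itH}u_0\|_{L^2}^2\,dt\leq C\|u_0\|_{L^2}^2
\end{equation*}
for a slightly larger cutoff $\tilde\chi$ equal to one on $\supp\chi$. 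Since $V$ is bounded by~(i), $(-\Delta+1)^{\m/4}$ is comparable to $(H+1)^{\m/4}$ on any compact region, so this smoothing estimate can be extracted from the polynomial-weight LAP via functional calculus for $H$.

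The main obstacle I foresee is that one cannot simply dominate $\chi|x|^{-\m/2}\leq CV^{1/2}$ pointwise, because assumption~(i) forces $V$ to be bounded near the origin whereas the weight is singular there, ruling out a direct reduction to a $V^{1/2}$-smoothing. The fractional Hardy route sidesteps this obstruction but demands the fractional derivative gain $\m/2$ uniformly across the full spectrum of $H$: at high frequencies a Littlewood--Paley-type decomposition of the resolvent becomes necessary to control commutators of $(-\Delta)^{\m/4}$ with $(H-z)^{-1}$, while at the low-energy threshold one again relies on the absence of a zero-resonance that the repulsivity condition~(iii) is precisely designed to ensure.
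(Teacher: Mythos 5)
Your reduction via the fractional Hardy inequality is exactly how the paper begins (its first lemma shows $\chi(x)|x|^{-\m/2}\jap{D_x}^{-\m/2}\jap{x}\in B(L^2(\re^2))$), and for $0<\m\le 1$ your argument is sound and essentially the paper's: there the sandwiched resolvent bound carries only $s=\m/2\le\frac12$ derivatives on each side, which follows from the high-energy weighted LAP together with a uniform low-energy bound (the paper invokes Nakamura's low-energy theorem for the latter; your bare Mourre argument near $z=0$ is the one delicate point there, and note that near threshold the correct weight is $\jap{x}^{-\c}$ with $\c>\frac12+\frac{\m}{4}$ rather than $\c>\frac12$ --- harmless for you only because your cutoffs are compactly supported). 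The genuine gap is the case $1<\m<2$: your intermediate target, the local smoothing estimate $\int_\re\|\tilde\chi(-\Delta)^{\m/4}e^{-itH}u_0\|_{L^2}^2\,dt\le C\|u_0\|_{L^2}^2$, equivalently $\sup_{z\in\mathbb{C}\setminus\re}\|\tilde\chi\jap{D_x}^{\m/2}(H-z)^{-1}\jap{D_x}^{\m/2}\tilde\chi\|<\infty$, demands a gain of $\m/2>\frac12$ derivatives, which exceeds the sharp Kato $\tfrac12$-smoothing threshold and is false at high frequencies even for $H=H_0$: a wave packet of frequency $\x$ spends time of order $|\x|^{-1}$ in the support of $\tilde\chi$, so the space-time norm grows like $|\x|^{\m/2-1/2}$. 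The paper states precisely this at the start of its subsection on $1<\m<2$ (``the inequality might be false even if $H$ is replaced by $H_0$, where the problem is the high energy case''). Since the failure is in the estimate itself and not in commutator errors, the Littlewood--Paley/commutator control you propose at high frequencies cannot repair it; the Hardy step is simply too lossy there, converting a weight that is merely an $L^q(\re^2)$ function, $q>2$, into derivatives uniformly over all frequencies.

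What is needed instead for $1<\m<2$ and $|z|\ge 1$ --- and what the paper does --- is to keep the weight as an $L^q$ function rather than trading it for derivatives: one proves the uniform Sobolev-type bounds $\sup_{|z|\ge 1}\|(H-z)^{-1}\|_{p\to p^*}<\infty$ for $1<p\le\frac{6}{5}$, via the resolvent identity $R(z)=R_0(z)-R_0(z)VR_0(z)+R_0(z)VR(z)VR_0(z)$, the Kenig--Ruiz--Sogge/Ionescu--Schlag bounds for the free resolvent, and the Besov-space limiting absorption principle for $R(z)$; then, since $\chi(x)|x|^{-\m/2}\in L^q(\re^2)$ for some $2<q\le 3$, H\"older's inequality yields the singular-weight bound at high energy. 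Your low-energy analysis and the case $0<\m\le 1$ can stand as written (modulo citing a genuine low-energy LAP such as Nakamura's), but the high-frequency part of your plan for $1<\m<2$ must be replaced by an argument of this kind.
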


\begin{rem}
Using the method written in the next section, we can prove a local smoothing estimate:
\begin{align}\label{locsmooth}
\sup_{z\in\mathbb{C}\setminus \re}\|\jap{x}^{-\c}\jap{D_x}^{\frac{1}{2}}(H-z)^{-1}\jap{D_x}^{\frac{1}{2}} \jap{x}^{-\c}\|<\infty,
\end{align}
for $\c>\frac{1}{2}+\frac{\m}{4}$ and for $n\geq 1$, that is, $\jap{x}^{-\c}\jap{D_x}^{\frac{1}{2}}$ is $H$-supersmooth. In fact, $(\ref{locsmooth})$ follows from $(\ref{semicl})$ and $(\ref{generalbd1})$, which are true for each $n\geq 1$. We also recall that $\jap{x}^{-1}\jap{D_x}^{\frac{1}{2}}$ is $H_0$-supersmooth for $n\geq 3$ (\cite{KY}).
\end{rem}

By the smooth perturbation theory \cite[Theorem XIII.25]{RS}, we obtain a global in time estimate for the propagator $e^{-itH}$.

\begin{cor}\cite[Conjectured in Remark 1.6]{M2}\label{corsmooth}
Suppose $n=2$ and let $\chi\in C_c^{\infty}(\re^2)$. Then we have
\begin{align*}
\|\chi(x)|x|^{-\frac{\m}{2}}e^{-itH}u_0\|_{L^2(\re^{3})}\leq C\|u_0\|_{L^2(\re^2)}\quad u_0\in L^2(\re^2).
\end{align*}
\end{cor}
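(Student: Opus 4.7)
The plan is a direct application of Kato's smooth perturbation theory, with Theorem~\ref{main} supplying precisely the resolvent bound that is needed. First I would set $A:=\chi(x)|x|^{-\m/2}$, regarded as a multiplication operator on $L^2(\re^2)$. Since $\chi\in C_c^{\infty}(\re^2)$ has compact support and $0<\m<2=n$, the function $\chi(x)|x|^{-\m/2}$ is real-valued and square-integrable (indeed $\int_{|x|<1}|x|^{-\m}\,dx<\infty$ iff $\m<2$), although not bounded. Interpreted as the maximal multiplication operator by this real-valued function, $A$ is densely defined, closed and self-adjoint, so in particular $A^{*}=A$.

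Next, Theorem~\ref{main} is literally the statement
$$\sup_{z\in\mathbb{C}\setminus\re}\|A(H-z)^{-1}A^{*}\|<\infty,$$
which is the hypothesis of the smooth perturbation criterion \cite[Theorem XIII.25]{RS}. That theorem then yields $H$-smoothness of $A$ in the sense of Kato, namely
$$\int_{\re}\|Ae^{-itH}u_0\|_{L^2(\re^2)}^{2}\,dt\leq C\|u_0\|_{L^2(\re^2)}^{2}$$
for every $u_0\in L^2(\re^2)$. Rewriting the left-hand side via Fubini as $\|\chi(x)|x|^{-\m/2}e^{-itH}u_0\|_{L^2(\re^3)}^{2}$, where $L^2(\re^3)$ is understood as $L^2(\re_t\times\re^2_x)$, gives the stated corollary.

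There is essentially no hard step here: all of the analytic content is packaged into Theorem~\ref{main}, and the corollary is merely a translation of a resolvent estimate into a space-time $L^2$ bound. The only items that deserve an explicit comment are the self-adjointness of the unbounded multiplication operator $A$ (so that $A^{*}=A$ matches the symmetric form of the bound in Theorem~\ref{main}) and the identification of $L^2(\re_t;L^2(\re^2_x))$ with $L^2(\re^3)$.
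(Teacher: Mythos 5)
Your proposal is correct and is essentially the paper's own argument: the paper deduces the corollary from Theorem \ref{main} exactly by invoking Kato's smooth perturbation theory \cite[Theorem XIII.25]{RS}, with $A=\chi(x)|x|^{-\m/2}$ playing the role of the $H$-smooth operator. Your additional remarks on the self-adjointness of the multiplication operator and the identification $L^2(\re_t;L^2(\re^2_x))\simeq L^2(\re^3)$ are harmless elaborations of the same route.
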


\begin{rem}
By $(\ref{locsmooth})$, we have $\|\jap{x}^{-\c}\jap{D_x}^{\frac{1}{2}}e^{-itH}u_0\|_{L^2(\re^{n+1})}\leq C\|u_0\|_{L^2(\re^n)}$ for $u_0\in L^2(\re^n)$, $n\geq 1$ and $\c>\frac{1}{2}+\frac{\m}{4}$.

\end{rem}

\begin{proof}[Proof of Theorem \ref{twoSt} assuming Corollary \ref{corsmooth}] The inhomogeneous estimates  $(\ref{Inhom})$ follow from the homogeneous estimates $(\ref{hom})$ and the Christ-Kiselev lemma \cite{CK}. Thus, we only need to prove the homogeneous estimates $(\ref{hom})$. Let $\chi\in C_c^{\infty}(\re^n;[0,1])$ such that $\chi(x)=1$ on the unit ball.
Set $W=Z|x|^{-\m}+\e V_S$. Following the proof of \cite[Theorem 1.2]{M2}, write $W(x)=V_1(x)+V_2(x)$, where
\begin{align*}
V_1(x)=\chi(x)^2+(1-\chi(x)^2)W(x),\quad V_2(x)=\chi(x)^2(W(x)-1).
\end{align*}
Then it turns out that $V_1$ satisfies the condition $(i)$, $(ii)$ and $(iii)$. Set $B=|V_2|^{\frac{1}{2}}$ and $A=|V_2|^{\frac{1}{2}}\mathrm{sgn} V_2$. By virtue of the Strichartz estimates for $e^{-it(H_0+V_1)}$ (\cite[Theorem 1.1]{M2}) and \cite[Theorem 4.1]{RodSc}, then it suffices for $(\ref{hom})$ to prove that $B$ is $(H_0+V_1)$-smooth and $A$ is $(H_0+W$)-smooth. Since $|A|, |B|\leq C\chi(x)|x|^{-\frac{\m}{2}}$ with a constant $C>0$, we only need to prove that $\chi(x)|x|^{-\frac{\m}{2}}$ is both $(H_0+V_1)$-smooth and $(H_0+W$)-smooth. The former follows from Corollary \ref{corsmooth} and the latter follows from \cite[Proposition 5.1]{M2} with $\e>0$ small enough, where we note that \cite[Proposition 5.1]{M2} holds for $n=2$ since its proof is based on \cite[Corollary 2.21]{BM}. This completes the proof.
\end{proof}

In the rest of this paper, we prove Theorem \ref{main}.

We use the following notations throughout this paper. For Banach spaces $X$ and $Y$, $B(X,Y)$ denotes a set of all bounded linear operators from $X$ to $Y$. We denote the norm of a Banach space $X$ by $\|\cdot\|_X$. We write $\jap{x}=(1+|x|^2)^{\frac{1}{2}}$ and $D_x=(2\pi i)^{-1}\nabla_x$. 
We also denote $\|\cdot\|_{p\to q}=\|\cdot\|_{B(L^p,L^q)}$ and $\|\cdot\|=\|\cdot \|_{2\to 2}$.

\noindent\textbf{Acknowledgment.}  
 The author would like to thank Haruya Mizutani for suggesting this problem.

\section{Proof of Theorem \ref{main}}

\subsection{Preliminary}

\begin{lem}
Let $\g\in C_c^{\infty}(\re)$ and $s, k\in \re$.  Then we have
\begin{align}\label{Bound1}
\chi(x)|x|^{-\frac{\m}{2}}\jap{D_x}^{-\frac{\m}{2}}\jap{x}\in B(L^2(\re^2)),\quad \jap{x}^{-k}\jap{D_x}^{s}\g(H)\jap{x}^k\in B(L^2(\re^2)).
\end{align}
\end{lem}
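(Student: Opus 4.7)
The plan is to treat the two inclusions separately.

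For the first inclusion, I would pick an auxiliary $\tilde\chi\in C_c^\infty(\re^2)$ with $\tilde\chi\equiv 1$ on a neighborhood of $\supp\chi$ and split
\[
\chi(x)|x|^{-\m/2}\jap{D_x}^{-\m/2}\jap{x} = \chi(x)|x|^{-\m/2}\jap{D_x}^{-\m/2}\tilde\chi(x)\jap{x} + \chi(x)|x|^{-\m/2}\jap{D_x}^{-\m/2}(1-\tilde\chi(x))\jap{x}.
\]
For the near-diagonal piece, $\tilde\chi(x)\jap{x}\in L^\infty(\re^2)$, so $L^2$-boundedness reduces to showing $\chi(x)|x|^{-\m/2}\jap{D_x}^{-\m/2}\in B(L^2(\re^2))$. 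This follows from the two-dimensional fractional Hardy inequality
\[
\||x|^{-\m/2}u\|_{L^2(\re^2)}\leq C\|\jap{D_x}^{\m/2}u\|_{L^2(\re^2)},
\]
which is available precisely because $0<\m/2<1=n/2$, applied to $u=\jap{D_x}^{-\m/2}f$. The off-diagonal piece has integral kernel $\chi(x)|x|^{-\m/2}G_{\m/2}(x-y)(1-\tilde\chi(y))\jap{y}$ supported in $\{|x-y|\geq c\}$ for some $c>0$; since the Bessel kernel $G_{\m/2}$ is smooth and exponentially decaying away from the diagonal, a direct Cauchy--Schwarz in $y$ together with $\chi(x)|x|^{-\m/2}\jap{x}\in L^2(\re^2)$ (which uses $\m<2$) closes the bound.

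For the second inclusion I would invoke the Helffer--Sjöstrand representation
\[
\g(H) = -\frac{1}{\pi}\int_{\co}\bar\pa\tilde\g(z)(H-z)^{-1}\,dL(z),
\]
with $\tilde\g$ a compactly supported almost analytic extension of $\g$ satisfying $|\bar\pa\tilde\g(z)|\leq C_N|\Im z|^N$ for every $N$. The proof then reduces to a uniform-in-$z$ estimate of the form $\|\jap{x}^{-k}\jap{D_x}^s(H-z)^{-1}\jap{x}^k\|_{B(L^2(\re^2))}\leq C|\Im z|^{-M}$ for some $M=M(s,k)$. The weight conjugation is handled by iterating the identity $[\jap{x}^k,(H-z)^{-1}]=(H-z)^{-1}[H,\jap{x}^k](H-z)^{-1}$ and using that $[H,\jap{x}^k]=[-\Delta,\jap{x}^k]$ is a first-order differential operator with polynomially bounded coefficients. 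The factor $\jap{D_x}^s$ is controlled by elliptic regularity: the operator inequality $\jap{D_x}^2\leq C(H+C_0)$ yields $\|\jap{D_x}^2(H-z)^{-1}\|\leq C(1+|z|)|\Im z|^{-1}$, and iteration handles arbitrary $s$. Choosing $N$ sufficiently large in the almost analytic extension then makes the Helffer--Sjöstrand integral converge absolutely in operator norm.

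The main obstacle I anticipate lies in the first inclusion: dimension two is the critical case where the fractional Hardy inequality requires $\m/2<n/2=1$, so the argument leaves no margin in the Sobolev exponent, and the compact support of $\chi$ is essential to confine both the local singularity of $|x|^{-\m/2}$ and the growth of $\jap{x}$. For the second inclusion, the principal bookkeeping difficulty is tracking the number of $|\Im z|^{-1}$ factors generated by the commutator and resolvent estimates, but the arbitrarily fast decay of $\bar\pa\tilde\g$ in $|\Im z|$ absorbs any polynomial loss.
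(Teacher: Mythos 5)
Your proposal is correct, but it reaches both inclusions by different mechanics than the paper. For the first inclusion the paper makes no near/far splitting at all: it factors $\chi(x)|x|^{-\frac{\m}{2}}\jap{D_x}^{-\frac{\m}{2}}\jap{x}=\bigl(|x|^{-\frac{\m}{2}}\jap{D_x}^{-\frac{\m}{2}}\bigr)\bigl(\jap{D_x}^{\frac{\m}{2}}\chi(x)\jap{D_x}^{-\frac{\m}{2}}\jap{x}\bigr)$, bounds the first factor by exactly the fractional Hardy inequality you invoke (using $\frac{\m}{2}<1=\frac{n}{2}$), and disposes of the second factor --- which is precisely where your $\jap{x}$-growth difficulty lives --- by a single commutator/pseudodifferential estimate, exploiting that $\chi$ is compactly supported so the conjugated operator tolerates the weight $\jap{x}$ on the right. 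Your auxiliary-cutoff decomposition achieves the same thing more concretely: the near piece is again Hardy, and the far piece is a Hilbert--Schmidt bound using the exponential off-diagonal decay of the Bessel kernel $G_{\m/2}$; note only that the square-integrable weight actually needed there is $\chi(x)|x|^{-\frac{\m}{2}}\in L^2(\re^2)$ (this is where $\m<2$ enters), since the growth $\jap{y}$ sits in the other variable and is absorbed by the exponential decay as you say --- your displayed membership $\chi(x)|x|^{-\frac{\m}{2}}\jap{x}\in L^2$ is a harmless bookkeeping slip. For the second inclusion the paper only asserts that it ``immediately follows from a simple commutator argument''; your Helffer--Sj\"ostrand implementation is the standard way to make such an argument precise and indeed covers all real $s,k$ as stated, at the cost of routine but genuine bookkeeping (termination of the weight-commutator iteration for non-integer $k$, and combining the conjugated weight with $\jap{D_x}^{s}$, e.g.\ via the boundedness of $\jap{x}^{-k}\jap{D_x}^{s}\jap{x}^{k}\jap{D_x}^{-s}$), all of which is available because the $V$ in $H$ is the smooth potential satisfying (i), so elliptic regularity of every order holds. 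In short: same key inputs (Hardy with $\frac{\m}{2}<\frac{n}{2}$, local square-integrability of $|x|^{-\frac{\m}{2}}$, commutator control of weights), but your route is more hands-on and kernel-based for the weight growth, and more explicit (functional calculus) for $\g(H)$, where the paper is terse.
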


\begin{proof}
By Hardy's inequality, we have $|x|^{-\frac{\m}{2}}\jap{D_x}^{-\frac{\m}{2}}\in B(L^2(\re^2))$, where we note $\frac{\m}{2}<1=\frac{n}{2}$ for $n=2$. Moreover, a simple commutator argument gives $\jap{D_x}^{\frac{\m}{2}}\chi(x)\jap{D_x}^{-\frac{\m}{2}}\jap{x}\in B(L^2(\re^2))$.
Writing $\chi(x)|x|^{-\frac{\m}{2}}\jap{D_x}^{-\frac{\m}{2}}\jap{x}=|x|^{-\frac{\m}{2}}\jap{D_x}^{-\frac{\m}{2}}\cdot \jap{D_x}^{\frac{\m}{2}}\chi(x)\jap{D_x}^{-\frac{\m}{2}}\jap{x}$, we obtain $\chi(x)|x|^{-\frac{\m}{2}}\jap{D_x}^{-\frac{\m}{2}}\jap{x}\in B(L^2(\re^2))$. The boundedness of $\jap{x}^{-1}\jap{D_x}^{\frac{\m}{2}}\g(H)\jap{x}$ immediately follows from a simple commutator argument.

\end{proof}

\begin{lem}
We have
\begin{align}\label{semicl}
\sup_{z\in\mathbb{C}\setminus \re,\,\,|z|\geq 1}\|\jap{x}^{-\c}\jap{D_x}^{s}(H-z)^{-1}\jap{D_x}^{s} \jap{x}^{-\c}\|<\infty,
\end{align}
for $0\leq s\leq \frac{1}{2}$ and $\c>\frac{1}{2}$.
\end{lem}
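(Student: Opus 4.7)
The plan is to reduce the claim to a weighted semiclassical resolvent estimate for a rescaled Schr\"odinger operator and invoke Mourre's limiting absorption principle under the repulsivity hypothesis (iii). For $|z|\geq 1$, set $h=|z|^{-1/2}\in(0,1]$, $E=h^2z$ (so $|E|=1$ and $\Im E\neq 0$), and $P_h:=h^2H=-h^2\Delta+h^2V$. Then $(H-z)^{-1}=h^2(P_h-E)^{-1}$, so (\ref{semicl}) is equivalent to the uniform bound
\begin{equation*}
h^2\,\|\jap{x}^{-\c}\jap{D_x}^s(P_h-E)^{-1}\jap{D_x}^s\jap{x}^{-\c}\|\leq C\quad\text{for }h\in(0,1].
\end{equation*}
Since $V$ is bounded by (i) and non-negative by (ii), the form inequality $h^2\jap{D_x}^2\leq C(P_h+1)$ holds uniformly in $h\in(0,1]$, so by operator monotonicity $(h\jap{D_x})^{2s}\leq C_s(P_h+1)^s$; in particular $\jap{D_x}^s$ acts as $h^{-s}$ times a uniformly bounded order-$s$ semiclassical operator on any spectral window $\{P_h\sim 1\}$.

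The main ingredient is the semiclassical weighted resolvent estimate
\begin{equation*}
\|\jap{x}^{-\c}(P_h-E)^{-1}\jap{x}^{-\c}\|\leq Ch^{-1},\quad h\in(0,1],\;|E|=1,\;\Im E\neq 0,\;\c>\tfrac12,
\end{equation*}
a standard semiclassical LAP following from Mourre's commutator method with conjugate operator $A=\tfrac12(x\cdot D_x+D_x\cdot x)$. The virial identity
\[i[P_h,A]=h^2(-2\Delta-x\cdot\nabla V)=2P_h-2h^2V-h^2\,x\cdot\nabla V,\]
together with (iii) $-x\cdot\nabla V\geq c|x|^{-\m}$ for $|x|>R$ and (i)-(ii), yields the uniform Mourre estimate $\chi(P_h)\,i[P_h,A]\,\chi(P_h)\geq c_0\chi(P_h)^2$ for $\chi\in C_c^\infty(\re)$ supported near $|E|=1$, whence the weighted LAP by the usual Mourre bootstrap. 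A spectral cutoff separates this resonant contribution from the non-resonant part, which is $O(1)$ by the spectral theorem and combines harmlessly with the $\jap{D_x}^s$ weights via elliptic estimates. Inserting the factor $h^{-s}$ per Sobolev weight then yields $h^2\cdot h^{-2s}\cdot h^{-1}=h^{1-2s}\leq 1$ for $s\in[0,\tfrac12]$, as required.

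The main obstacle is the careful bookkeeping of commutators between the Fourier weights $\jap{D_x}^s$, the spectral cutoff $\chi(P_h)$, and the position weights $\jap{x}^{-\c}$, which have to be evaluated via the Helffer-Sj\"ostrand formula and yield remainder terms that must be bootstrapped using semiclassical pseudodifferential calculus adapted to the slow decay of $V$.
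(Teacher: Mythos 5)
Your proposal is correct in substance, but it takes a genuinely different, much more self-contained route than the paper: the paper disposes of this lemma in one line by quoting known high-energy (non-trapping) weighted resolvent estimates, which hold solely under the decay assumption $(i)$, while you essentially re-prove those estimates via the rescaling $h=|z|^{-1/2}$, $P_h=h^2H$, $E=z/|z|$, a semiclassical Mourre/LAP bound $\|\jap{x}^{-\gamma}(P_h-E)^{-1}\jap{x}^{-\gamma}\|\leq Ch^{-1}$, and the bookkeeping $\jap{D_x}^s\lesssim h^{-s}\jap{hD_x}^s$ giving $h^2\cdot h^{-2s}\cdot h^{-1}=h^{1-2s}\leq 1$ for $s\leq\frac12$; that scheme does work, and the commutator issues you flag (weights versus $\jap{hD_x}^s$ and $\chi(P_h)$) are routine. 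One structural point worth correcting: your virial argument does not need the repulsive hypotheses $(ii)$--$(iii)$ at all. After rescaling, the potential contributes only $O(h^2)$ terms to $i[P_h,A]=2P_h-2h^2V-h^2x\cdot\nabla V$, so for $|E|=1$ the localized commutator is bounded below by $2-Ch^2$ using only the boundedness and symbol-type decay in $(i)$; invoking $(iii)$ to get positivity is superfluous (its contribution is a harmless $O(h^2)$ positive term), and relying on it obscures the point emphasized in the paper's remark that the repulsive conditions are used only for the low-energy proposition, not for this high-energy lemma. With that caveat, your argument buys an explicit proof where the paper relies on citation, at the cost of considerably more machinery (Helffer--Sj\"ostrand calculus, semiclassical Mourre theory) than the statement requires.
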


\begin{proof}
This lemma directly follows from a high-energy resolvent estimates due to the assumption on the potential $(i)$.
\end{proof}

\begin{prop}
We have
\begin{align}\label{generalbd1}
\sup_{z\in\mathbb{C}\setminus \re,\,\,|z|\leq 1}\|\jap{x}^{-\c}\jap{D_x}^s(H-z)^{-1}\jap{D_x}^s \jap{x}^{-\c}\|<\infty,
\end{align}
for $0\leq s\leq 1$ and $\c>\frac{1}{2}+\frac{\m}{4}$.
\end{prop}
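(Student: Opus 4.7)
The plan is to prove (\ref{generalbd1}) via a positive-commutator (Mourre-type) argument with the dilation generator $A=\tfrac{1}{2}(x\cdot D_x + D_x\cdot x)$, combined with the commutator bounds of (\ref{Bound1}) to introduce the $\jap{D_x}^s$ factors.

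First I spectrally localize: pick $\psi\in C_c^\infty(\re)$ with $\psi\equiv 1$ on $[-2,2]$. The function $\lambda\mapsto (1-\psi(\lambda))/(\lambda-z)$ is uniformly bounded for $|z|\le 1$, so $(H-z)^{-1}(1-\psi(H))\in B(L^2)$ uniformly in $z$. Combined with $\jap{D_x}^s\psi(H)\in B(L^2)$ (which follows from $V\ge 0$, hence $\jap{D_x}^2\le C(H+1)$) and (\ref{Bound1}), this handles the contribution of $1-\psi(H)$. It remains to bound $\jap{x}^{-\c}\jap{D_x}^s\psi(H)(H-z)^{-1}\psi(H)\jap{D_x}^s\jap{x}^{-\c}$ uniformly for $|z|\le 1$.

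Next, I run Mourre theory on the low-energy part. Directly,
\begin{align*}
i[H,A]=2H_0-x\cdot\pa_x V,
\end{align*}
and conditions (i), (iii) yield $-x\cdot\pa_x V\ge C|x|^{-\m}\mathbf{1}_{|x|>R}$ modulo a bounded compactly supported error; together with $H_0\ge 0$ this gives the strict Mourre estimate on $\supp\psi$ once the compact error is absorbed via a virial argument ((ii) and (iii) preclude eigenvalues of $H$, since $0=\langle u,i[H,A]u\rangle\ge 2\|\nabla u\|^2$ forces $u=0$). Standard Mourre machinery then delivers, for any $\c_0>1/2$,
\begin{align*}
\sup_{|z|\le 1}\bigl\|\jap{x}^{-\c_0}\psi(H)(H-z)^{-1}\psi(H)\jap{x}^{-\c_0}\bigr\|<\infty.
\end{align*}
The delicate point is the threshold $z\to 0$: the Mourre constant degenerates there, and closing the LAP uniformly down to $z=0$ requires a quantitative low-energy analysis exploiting the global positivity $V\ge C\jap{x}^{-\m}$. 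This is the main obstacle, and it is where the margin $\m/4$ in $\c>\tfrac{1}{2}+\tfrac{\m}{4}$ enters: to compensate for the threshold degeneracy the LAP above must in fact be established with the stronger weight $\c_0>\tfrac{1}{2}+\tfrac{\m}{4}$.

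Finally, I insert the $\jap{D_x}^s$ factors. Pick $\c_0\in(\tfrac{1}{2}+\tfrac{\m}{4},\c)$, allowed by hypothesis. By the second bound of (\ref{Bound1}), $\jap{x}^{-\c_0}\jap{D_x}^s\psi(H)\jap{x}^{\c_0}\in B(L^2)$; since $\c\ge\c_0$, the multiplier $\jap{x}^{\c_0-\c}$ is bounded, and
\begin{align*}
\jap{x}^{-\c}\jap{D_x}^s\psi(H)=\jap{x}^{\c_0-\c}\bigl(\jap{x}^{-\c_0}\jap{D_x}^s\psi(H)\jap{x}^{\c_0}\bigr)\jap{x}^{-\c_0}
\end{align*}
is a product of $L^2$-bounded operators. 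The analogous bound holds on the right, and composing with the refined LAP from the previous step (after commuting $\psi(H)$ harmlessly through the polynomial weights, using the pseudodifferential calculus) gives (\ref{generalbd1}).
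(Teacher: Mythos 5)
Your outer structure matches the paper's: cut off the high energies with $\psi(H)$ (the paper likewise notes that $(H-z)^{-1}(1-\g(H)^2)$ is uniformly bounded from $H^{-s}$ to $H^{s}$ for $|z|\le 1$, $0\le s\le 1$), and commute the $\jap{D_x}^s$ factors through the spectral cutoff using the second bound in $(\ref{Bound1})$. The problem is that the core of the proposition --- a weighted resolvent estimate with weight $\jap{x}^{-\c}$, $\c>\frac{1}{2}+\frac{\m}{4}$, uniform in $|z|\le 1$ \emph{including the threshold} $z=0$ --- is exactly the step you do not prove. You correctly observe that the Mourre estimate for the dilation generator degenerates as $z\to 0$: the commutator $2H_0-x\cdot\pa_x V$ is bounded below at low energies only by something like $c\jap{x}^{-\m}$, not by a positive constant, so ``standard Mourre machinery'' yields constants that blow up as the spectral parameter approaches $0$ and cannot give a bound uniform over $|z|\le 1$. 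Your sentence that closing the LAP down to $z=0$ ``requires a quantitative low-energy analysis exploiting the global positivity $V\ge C\jap{x}^{-\m}$'' with the stronger weight $\c_0>\frac{1}{2}+\frac{\m}{4}$ is precisely the content of the proposition, and it is left as an assertion. The paper closes this by invoking Nakamura's theorem [N, Theorem 1.8], which gives $\sup_{z\in\co\setminus\re}\|\jap{x}^{-\c}(H-z)^{-1}\jap{x}^{-\c}\|<\infty$ for $\c>\frac{1}{2}+\frac{\m}{4}$ under exactly the hypotheses (i)--(iii); that low-energy conjugate-operator/scaling analysis is where the repulsivity and the exponent $\frac{\m}{4}$ really enter, and it is nontrivial. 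Without reproducing such an analysis or citing such a result, the proof is incomplete at its central point.

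A secondary inaccuracy: the one-line virial argument ``$0=\langle u,i[H,A]u\rangle\ge 2\|\nabla u\|^2$'' is not valid as written, since (iii) gives $-x\cdot\pa_x V\ge C|x|^{-\m}$ only for $|x|>R$, so the commutator has no sign on the compact region $|x|\le R$. Absence of eigenvalues is in any case not the bottleneck (by (ii), $H\ge H_0+C\jap{x}^{-\m}>0$, so there are no eigenvalues at or below zero, and positive eigenvalues are excluded by standard arguments); the genuine obstacle is the uniformity of the limiting absorption principle at the threshold, as described above.
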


\begin{rem}
In our proof of Theorem \ref{main}, the repulsive condition $(ii)$ and $(iii)$ are used for this proposition only.
\end{rem}

\begin{proof}
 Let $\g\in C_c^{\infty}(\re)$ satisfying $\g(t)=1$ on $|t|\leq 2$.
By $(\ref{Bound1})$ and Nakamura's result (\cite[Theorem 1.8]{N}):
\begin{align*}
\sup_{z\in \mathbb{C}\setminus \re}\|\jap{x}^{-\c}(H-z)^{-1}\jap{x}^{-\c}\|<\infty,
\end{align*}
it turns out that
\begin{align}\label{generalbd2}
\sup_{z\in\mathbb{C}\setminus \re}\|\jap{x}^{-\c}\jap{D_x}^{s}\g(H)(H-z)^{-\c}\g(H) \jap{D_x}^{s} \jap{x}^{-1}\|<\infty.
\end{align}
We note that $(H-z)^{-1}(1-\g(H)^2)$ is bounded from $H^{s}(\re^2)$ to $H^{-s}(\re^2)$ and its operator norm is uniformly bounded in $|z|\leq 1$. Thus $(\ref{generalbd1})$ follows from the estimate $(\ref{generalbd2})$.
\end{proof}

\subsection{The case $0<\m\leq 1$}

In this subsection, we assume $0<\m\leq 1$. By virtue of $(\ref{Bound1})$, for proving Theorem \ref{main}, we only need to show
\begin{align}\label{Bound2}
\sup_{z\in \mathbb{C}\setminus \re}\|\jap{x}^{-1}\jap{D_x}^{\frac{\m}{2}}(H-z)^{-1}\jap{D_x}^{\frac{\m}{2}} \jap{x}^{-1}\|<\infty.
\end{align}
This bound directly follows from $(\ref{semicl})$ and $(\ref{generalbd1})$ since $\frac{\m}{2}\leq \frac{1}{2}$.
This completes the proof.

\subsection{The case $1<\m< 2$}

In this subsection, we assume $1<\m< 2$. In this case, the inequality $(\ref{Bound2})$ might be false even if $H$ is replaced by $H_0$, where the problem is high energy case $|z|\geq 1$.

 By $(\ref{Bound1})$ and $(\ref{generalbd1})$, for proving Theorem \ref{main}, it suffices to show that
\begin{align}\label{Bound3}
\sup_{z\in \mathbb{C}\setminus \re,\,|z|\geq 1}\|\chi(x)|x|^{-\frac{\m}{2}}(H-z)^{-1}|x|^{-\frac{\m}{2}}\chi(x)\|<\infty.
\end{align}
We denote $R(z)=(H-z)^{-1}$ and $R_0(z)=(H_0-z)^{-1}$. We shall prove more general estimates
\begin{align}\label{Bound4}
\sup_{z\in \mathbb{C}\setminus \re,\,|z|\geq 1}\|R(z)\|_{p\to p^*}<\infty,\quad \text{for}\quad 1<p\leq \frac{6}{5},
\end{align}
which are proved in \cite{IS} for middle energy case $|z|\sim 1$.
In fact, since $\chi(x)|x|^{-\frac{\m}{2}}\in L^q(\re^2)$ for some $2<q\leq 3$, the estimate $(\ref{Bound3})$ follows from H\"older's inequality and $(\ref{Bound4})$.

By the resolvent identity, we have
\begin{align*}
R(z)=R_0(z)-R_0(z)VR_0(z)+R_0(z)VR(z)VR_0(z),
\end{align*}
which implies
\begin{align}
\|R(z)\|_{p\to p^*}\leq& \|R_0(z)\|_{p\to p^*}+\|R_0(z)\|_{B(\mathcal{B},L^{p^*})}\|V\|_{B(\mathcal{B}^*, \mathcal{B})}\|R_0(z)\|_{B(L^p,\mathcal{B}^*)}\nonumber\\
&+\|R_0(z)\|_{B(\mathcal{B},L^{p^*})}\|R(z)\|_{B(\mathcal{B}, \mathcal{B}^*)}\|V\|_{B(\mathcal{B}^*, \mathcal{B})}^2\|R_0(z)\|_{B(L^p,\mathcal{B}^*)},\label{Bound5}
\end{align}
where we denote $D_j=\{x\in \re^2\mid |x|\in [2^{j-1},2^j]\}$ for $j\geq 1$, $D_0=\{x\in \re^2\mid |x|\leq 1\}$ and
\begin{align*}
\mathcal{B}=\{u\in L^2_{loc}(\re^2) \mid \sum_{j=0}^{\infty}2^{j/2}\|u\|_{L^2(D_j)}<\infty\},\,\, \mathcal{B}^*=\{u\in L^2_{loc}(\re^{2})\mid \sup_{j\geq 0}2^{-j/2}\|u\|_{L^2(D_j)}<\infty\}.
\end{align*}
Since $\jap{x}^{-s}L^2(\re^2)\subset \mathcal{B}\subset \mathcal{B}^*\subset \jap{x}^{s}L^2(\re^2)$ for $s>\frac{1}{2}$ and $|V(x)|\leq C\jap{x}^{-\m}$ with $\m>1$, we have $V\in B(\mathcal{B}^*, \mathcal{B})$. The bounds for the free operator
\begin{align*}
\sup_{z\in \mathbb{C}\setminus \re,\,|z|\geq 1}\|R_0(z)\|_{p\to p^*},\,\,  \sup_{z\in \mathbb{C}\setminus \re,\,|z|\geq 1}\|R_0(z)\|_{B(\mathcal{B},L^{p^*})},\,\,\sup_{z\in \mathbb{C}\setminus \re,\,|z|\geq 1}\|R_0(z)\|_{B(L^p,\mathcal{B}^*)}<\infty
\end{align*}
are well-known (\cite{IS}, \cite{KRS}, \cite{RV}). Moreover, the standard limiting absorption principle between Besov spaces gives
\begin{align*}
\sup_{z\in \mathbb{C}\setminus \re,\,|z|\geq 1}\|R(z)\|_{B(\mathcal{B}, \mathcal{B}^*)}<\infty.
\end{align*}
Hence the right hand side of $(\ref{Bound5})$ is uniformly bounded in $|z|\geq 1$, which implies $(\ref{Bound4})$.

\begin{rem}
In the proof above, we can avoid the use of Besov spaces. In fact, if we take $s>\frac{1}{2}$ such that $V\in B(\jap{x}^{s}L^2(\re^2), \jap{x}^{-s}L^2(\re^2))$ and if replace $\mathcal{B}$ by $\jap{x}^{-s}L^2(\re^2)$ and $\mathcal{B}^*$ by $\jap{x}^{s}L^2(\re^2)$, the proceeding argument remains true.
\end{rem}

\end{document}